\newtheorem{theorem}{Theorem}
\newtheorem{lem}{Lemma}
\newtheorem{defn}{Definition}
\def\Z{{\mathbb Z}}
\def\N{{\mathbb N}}
\def\G{\overline{G}}
\def\H{\overline{H}}
\def\H{\overline{H}}
\def\K{\overline{K}}
\def\G{\overline{G}}
\def\H{\overline{H}}
\begin{document}
\begin{center}
\LARGE {\bf \textsc{Geochromatic Number when Crossings are Independent}} 
\end{center}\bigskip

\begin{center}
\textsc{Debra L. Boutin} 

\textsc{Department of Mathematics and Statistics}

\textsc{Hamilton College, Clinton, NY 13323}

\textsc {\sl dboutin@hamilton.edu}

\end{center}

\begin{center}

\textsc{Alice Dean}

\textsc{Department of Mathematics and Statistics}

\textsc{Skidmore College, Clinton, NY 12866}

\textsc {\sl adean@skidmore.edu}

\end{center}

\title{Geochromatic Numbers}

\begin{abstract} A {\it geometric graph}, $\G$, is a graph drawn in the plane, with straight line edges and vertices in general position.  A {\it geometric homomorphism} between two geometric graphs $\G$, $\H$ is a vertex map $f:\G\to \H$ that preserves vertex adjacency and edge crossings. The {\it geochromatic number} of $\G$, denoted $X(\G)$, is the smallest integer $n$ so that there is a geometric homomorphism from $\G$ to some geometric realization of $K_n$.  Recall that  the {\it chromatic number} of an abstract graph $G$, denoted $\chi(G)$, is the smallest integer $n$ for which there is a graph homomorphism from $G$ to $K_n$. It is immediately clear that $\chi(G)\leq X(\G)$. This paper establishes some upper bounds on $X(\G)$ in terms of $\chi(G)$.  For instance, if all crossings are at distance at least 1 from each other, then $X(\G)\leq 3\chi(G)$.  However, there are more precise results.  If all crossing are at distance at least 2, then $X(\G)\leq \chi(G)+2$.  If all crossings are at distance at least 1, and there is a graph homomorphism $f: G \to K_n$ that maps no pair of edges that cross in $\G$ to the same edge in $K_n$, then $X(\G)\leq 2n$.  Finally, if $\chi(G)\in \{2,3\}$ and all crossings are at distance at least 1, then $X(\G)\leq 2\chi(G)$. \end{abstract}

\section{Introduction}

A geometric graph $\overline{G}$ is a simple graph drawn in the plane, on vertices in general position, with straightline edges.  Abstractly what we care about in a geometric graph is which pairs of vertices are adjacent and which pairs of edges cross.  In particular, two geometric graphs are said to be isomorphic if there is a bijection between their vertex sets that preserves adjacencies, non-adjacencies, crossings, and non-crossings.\medskip

A natural way to extend the ideas of abstract graph homomorphisms to the context of geometric graphs is to define a geometric homomorphism as a vertex map $f:\overline{G} \to \overline{H}$ that preserves both adjacencies and crossings (but not necessarily non-adjacencies or non-crossings).  If such a map exists, we write $\overline{G}\to\overline{H}$ and say \lq$\overline{G}$ is (geometrically) homomorphic to $\overline{H}$.'  There are many similarities between abstract graph homomorphisms and geometric graph homomorphisms, but there are also great contrasts.  Further, results that are straightforward in abstract graph homomorphism theory can become complex in geometric graph homomorphism theory.  \medskip

In abstract graph homomorphism theory (of simple graphs) two vertices cannot be identified under any homomorphism if and only if they are adjacent.  In Section \ref{subsec:generalobs}  we review some of the reasons why two vertices might not be able to be identified under any geometric homomorphism:  if they are adjacent;  if they are involved in a common edge crossing;  if they are endpoints of an odd length path each edge of which is crossed by a common edge; if they are endpoints of a path of length 2 whose edges cross all edges of an odd length cycle.\medskip

Recall that one definition for the {\it chromatic number} of a graph $G$, denoted $\chi(G)$, is the smallest integer $n$ so that $G\to K_n$.  By the transitivity of homomorphisms, if $G \to H$, then $\chi(G) \leq \chi(H)$.  Analogously, the {\it geochromatic number} of a geometric graph $\overline{G}$, denoted $X(\overline{G})$, is the smallest integer $n$ so that  $\overline{G}\to \overline{K}_n$, for some geometric $n$-clique $\overline{K}_n$. We immediately get that  $\overline{G} \to \overline{H}$ implies $X(\overline{G}) \leq X(\overline{H})$.  There are also other parameters whose relationships are preserved by geometric homomorphisms.  The {\it thickness} of a geometric graph, denoted $\theta(\overline{G})$, is the minimum number of plane layers of $\overline{G}$.  In \cite{BC2012} we see that that $\overline{G}\to\overline{H}$ implies both $\chi(G) \leq \chi(H)$ and $\theta(\overline{G})\leq \theta(\overline{H})$.\medskip

Two crossings in a geometric graph are said to be {\em independent} if they have no vertices in common.  Further, we  define the {\it distance} between two crossings as the minimum distance between the vertices of the first crossing and the vertices of the second. Thus, crossings are independent if they are at distance at least 1. In Section \ref{sec:geochrom} we focus on the geochromatic number of geometric graphs with given chromatic number and independent crossings.  In Section \ref{sec:pseudo} we briefly discuss a geometric graph coloring that is weaker than the geochromatic coloring, but might be mistaken for it. \medskip 

\section{Background}\label{subsec:generalobs}

For additional background on graph homomorphisms, see \cite{HN2004}.  For more information on geometric graphs, see \cite{P2004a, P2004b}. For other articles on geometric homomorphisms, see \cite{BC2012, BCDM2012, CoYo2013, Co2017, Co2018}.\medskip

In abstract graph homomorphism theory, every graph on $n$ vertices is homomorphic to $K_n$.  This is not true for geometric graphs.  In fact, two different geometric realizations of the same abstract graph are not necessarily homomorphic to each other.  For example, consider the two geometric realizations of $K_6$ given in Figure \ref{6cliques}.  The first has a vertex with all incident edges crossed, the second does not;  this can be used to prove that there is no geometric homomorphism from the first to the second.  The second has more crossings than the first; this can be used to prove that there is no geometric homomorphism from the second to the first.\medskip

\begin{figure}[hbt]
 \centering
 \scalebox{0.4}
 {
 \begin{tikzpicture}[scale=4]
	
	 \tikzstyle{vertex}=[draw, circle, thick] 
	 \tikzstyle{edge} = [draw,line width=1.5pt,-]
	 \node[vertex,   fill =black] (a) at (0,-.15){};
	 \node[vertex, fill = black] (b) at (0,.95){};
	 \node[vertex, fill = black] (c) at (1, .15){};
	 \node[vertex, fill = black] (d) at (-1, .15){};
	 \node[vertex, fill = black] (e) at (-.6,-1){};
	 \node[vertex, fill = black] (f) at (.6,-1){};
	 
	 \draw[edge] (a) -- (b); 
	 \draw[edge] (a) -- (c); 
	  \draw[edge] (a) -- (d);
	   \draw[edge] (a) -- (e);
	    \draw[edge] (a) -- (f);
	     \draw[edge] (b) -- (c);
	      \draw[edge] (b) -- (d);
	       \draw[edge] (b) -- (e);
	        \draw[edge] (b) -- (f);
	         \draw[edge] (c) -- (d);
	          \draw[edge] (c) -- (e);
	           \draw[edge] (c) -- (f);
	            \draw[edge] (d) -- (e);
	             \draw[edge] (d) -- (f);
	              \draw[edge] (e) -- (f);

	 \node[vertex,   fill =black] (A) at (3.05,1){};
	 \node[vertex, fill = black] (B) at (4,.5){};
	 \node[vertex, fill = black] (C) at (4, -.5){};
	 \node[vertex, fill = black] (D) at (3.05,-1){};
	 \node[vertex, fill = black] (E) at (2,-.5){};
	 \node[vertex, fill = black] (F) at (2,.5){};
	 
	 \draw[edge] (A) -- (B); 
	 \draw[edge] (A) -- (C); 
	  \draw[edge] (A) -- (D);
	   \draw[edge] (A) -- (E);
	    \draw[edge] (A) -- (F);
	     \draw[edge] (B) -- (C);
	      \draw[edge] (B) -- (D);
	       \draw[edge] (B) -- (E);
	        \draw[edge] (B) -- (F);
	         \draw[edge] (C) -- (D);
	          \draw[edge] (C) -- (E);
	           \draw[edge] (C) -- (F);
	            \draw[edge] (D) -- (E);
	             \draw[edge] (D) -- (F);
	              \draw[edge] (E) -- (F);
	           	  \end{tikzpicture}}
 \caption{Homomorphically distinct realizations of $K_6$.}\label{6cliques}
 \end{figure}
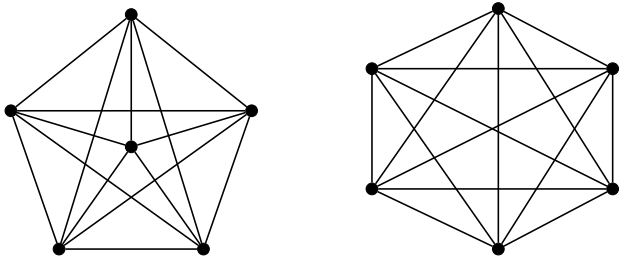

The following observations were proved in  \cite{BC2012}.  They are replicated here so that we can use them, but also to give a sense of some of the difficulties of finding geometric homomorphisms.\medskip 

\newpage

\begin{theorem}\label{thm:obs} \rm \cite{BC2012}\hskip-.5in\begin{enumerate}
\item[(a)]\label{obs1}  Adjacent vertices cannot be identified by any geometric homomorphism.
\item[(b)] \label{obs2} Endpoints of edges that cross cannot be identified by any geometric homomorphism. 
\item[(c)] \label{obs3} The endpoints of an odd-length path cannot be identified by any geometric homomorphism if there is an edge that crosses all the edges of the path. See the geometric graph on the left in Figure \ref{fig:(c),(d)}.
\item[(d)]\label{obs4} The endpoints of a path of length 2 cannot be identified by any geometric homomorphism if its edges cross all edges of an odd-length cycle. See the geometric graph on the right in Figure \ref{fig:(c),(d)}.\end{enumerate}\end{theorem}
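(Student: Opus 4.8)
The plan is to prove all four parts by contradiction, in the order stated, letting the later parts use the earlier ones. Fix a geometric homomorphism $f:\overline{G}\to\overline{H}$; recall that $f$ sends edges to edges and crossing pairs of edges to crossing pairs of edges, and that the vertices of $\overline{H}$ are in general position.

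Parts (a) and (b) are immediate. For (a), if $f(u)=f(v)$ with $uv$ an edge, then $f(u)$ would be adjacent to itself, which is impossible in a simple graph. For (b), if edges $ab$ and $cd$ cross --- so $a,b,c,d$ are distinct --- and, say, $f(a)=f(c)$, then the images $f(a)f(b)$ and $f(a)f(d)$ share the vertex $f(a)$ and therefore cannot cross each other, contradicting the fact that $f$ preserves the crossing of $ab$ with $cd$.

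For (c) and (d) I would first isolate the geometric fact they share: if segments $pq$ and $rs$ cross, then, since $p,q,r,s$ are vertices of $\overline{H}$ and hence no three are collinear, $p$ and $q$ lie strictly on opposite sides of the line $\ell$ through $r$ and $s$. This turns both parts into a parity argument. For (c), let $P=v_0v_1\cdots v_{2k+1}$ be the odd path, $e=xy$ the edge crossing every $v_iv_{i+1}$, and suppose $f(v_0)=f(v_{2k+1})$. Part (a) makes each $f(v_iv_{i+1})$ a genuine segment; part (b), applied to $e$ crossing $v_iv_{i+1}$, together with general position, keeps every $f(v_i)$ off the line $\ell$ through $f(x)$ and $f(y)$. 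Since $f(e)$ crosses each $f(v_iv_{i+1})$, the images $f(v_i)$ and $f(v_{i+1})$ lie on opposite sides of $\ell$, so the side alternates along the path; because the path has odd length, $f(v_0)$ and $f(v_{2k+1})$ land on opposite sides of $\ell$ --- contradicting $f(v_0)=f(v_{2k+1})$.

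For (d), let $v_0v_1v_2$ be the length-$2$ path whose two edges cross every edge of the odd cycle $C=c_0c_1\cdots c_{2k}c_0$, and suppose $f(v_0)=f(v_2)$. Then $f(v_0v_1)$ and $f(v_1v_2)$ are one and the same edge $g$ of $\overline{H}$, and we let $\ell$ be the line through its endpoints $f(v_0)$ and $f(v_1)$ (distinct by (a)). As before, (a), (b), and general position make each $f(c_ic_{i+1})$ a genuine segment with $f(c_i)$ off $\ell$, and $g$ must cross each of them; hence $f(c_i)$ and $f(c_{i+1})$ lie on opposite sides of $\ell$ for every $i$ around the cycle. That is a proper $2$-coloring of an odd cycle, which is impossible --- so $v_0$ and $v_2$ cannot be identified. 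The one delicate point throughout, and the thing to be most careful about, is the bookkeeping that keeps the relevant image vertices off the separating line $\ell$ (so that ``lies strictly on one side'' is meaningful), which is exactly where parts (a), (b), and the general-position hypothesis on $\overline{H}$ are genuinely used; it is also worth noting that in (d) the collapse of two edges of $\overline{G}$ onto the single edge $g$ is not an obstruction but precisely the source of the contradiction.
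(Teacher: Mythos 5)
The paper states this theorem without proof, citing \cite{BC2012}, so there is no in-paper argument to compare against; your proof is correct and is essentially the standard one. Parts (a) and (b) follow as you say from simplicity of $\overline{H}$ and from the fact that two segments sharing an endpoint cannot cross, and your handling of (c) and (d) --- the line through the image of the crossing edge separates the images of consecutive path/cycle vertices, yielding a parity contradiction for an odd path and an impossible proper $2$-coloring for an odd cycle --- is exactly the argument of \cite{BC2012}, including the correct observation that in (d) the collapse of the two path edges onto a single image edge is what forces every cycle edge to be crossed by that one edge.
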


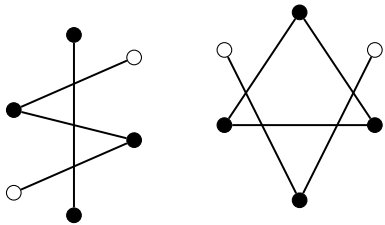
\begin{figure}[hbt]
 \centering
 \scalebox{0.5}
 {
 \begin{tikzpicture}[scale=4]
	
	 \tikzstyle{vertex}=[draw, circle, thick] 
	 \tikzstyle{edge} = [draw,line width=1.5pt,-]
	 \node[vertex,   fill =black] (a) at (0,0){};
	 \node[vertex, fill = black] (b) at (1,0){};
	 \node[vertex, fill = black] (c) at (.5,.75){};
	 \node[vertex, fill = white] (x) at (0,.5){};
	 \node[vertex, fill = white] (y) at (1,.5){};
	 \node[vertex, fill = black] (z) at (.5,-.5){};
	 \draw[edge] (a) -- (b) -- (c) -- (a);
	 \draw[edge] (x) -- (z)--(y);

	 \node[vertex, fill =black] (f) at (-1,.6){};
	 \node[vertex, fill = black] (e) at (-1,-.6){};
	 \node[vertex, fill = white] (1) at (-1.4,-.45){};
	 \node[vertex, fill = black] (2) at (-.6,-.1){};
	 \node[vertex, fill = black] (3) at (-1.4,.1){};
	 \node[vertex, fill = white] (4) at (-.6,.45){};
	 \draw[edge] (1) -- (2) -- (3) -- (4);
	 \draw[edge] (e) -- (f);
 \end{tikzpicture}}
 \caption{In each, the white vertices cannot be identified}
 \label{fig:(c),(d)}
 \end{figure}

Recall that the definition of geochromatic number of a geometric graph is a generalization of the chromatic number of a graph $G$, $\chi(G)$, as the smallest $n$ so that there is a graph homomorphism $G\to K_n$.  In particular we have the following definition. 

\begin{defn} \rm 
Let $\overline{G}$ be a geometric graph. The {\it geochromatic number} of  $\overline{G}$, denoted by $X(\overline{G})$, is the smallest positive integer $n$ such that $\overline{G} \to \overline{K}_n$ for some geometric clique $\overline{K}_n$.\end{defn}

The geochromatic number of $\overline{G}$ must be large enough not only to accommodate the adjacency relationships among vertices of $\overline{G}$, but also the crossing relationships among its edges.\medskip
 
In investigating geochromatic numbers, it makes sense to start with the smallest ones.  We can see that $X(\overline{G})=1$ if and only if $\overline{G}$  has no edges.\medskip

Since a geometric graph with a pair of crossing edges requires at least four colors in a geochromatic coloring, we immediately get  that $X(\overline{G}) = 2$ if and only if $\overline{G}$ is a bipartite plane geometric graph, and that $X(\overline{G}) = 3$ if and only if $\overline{G}$ is a 3-chromatic plane geometric graph. Since deciding whether $\chi(G) \leq 3$ is an NP-complete problem \cite{S1973}, deciding whether $X(\overline{G}) \leq 3$ is NP-complete also. Theorems in \cite{BC2012}  give conditions that are necessary but not sufficient for $X(\G)=4$, as well as  conditions that are sufficient but not necessary. In \cite{BC2012} we also see that for any $n$ there exists a thickness-2 geometric bipartite graph with geochromatic number $n$.  Thus even bounding both the chromatic number and  thickness of a geometric graph does not bound its geochromatic number.\medskip

Similarly, the number of crossings in a geometric graph often tells us little about its geochomatic number.  Above we mentioned that if $\G$ contains no crossing edges then $X(\G)=\chi(G)\leq 4$.  The next theorem tells us that even if $\G$ has large number of edge crossings, all we can say is that $X(\G)\geq 4$.\medskip

  \begin{theorem} \rm For every $k\in \N$ there is a geometric graph $\G$ with $k$ crossings and $X(\G)=4$. \end{theorem}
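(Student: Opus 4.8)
The plan is to construct, for each $k\in\N$, an explicit geometric graph $\G$ that has exactly $k$ crossings and for which $X(\G)=4$. The lower bound $X(\G)\ge 4$ is free once $\G$ contains at least one pair of crossing edges: by Theorem~\ref{thm:obs}(b) the four endpoints of a crossing pair must receive four distinct colors, so a single crossing forces $X(\G)\ge 4$. Thus the whole content is the upper bound $X(\G)\le 4$, i.e.\ exhibiting a geometric graph with $k$ crossings that \emph{does} admit a geometric homomorphism onto some geometric $\overline{K}_4$.

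First I would fix a convenient geometric realization $\overline{K}_4$ of $K_4$ — the natural one with three ``outer'' vertices forming a triangle and one ``inner'' vertex, which is a plane drawing (no crossings), and also the alternative realization of $K_4$ drawn as a quadrilateral with its two diagonals, which has exactly one crossing. The target must have at least one crossing to receive $\G$'s crossings, so I would use the latter: label its vertices $1,2,3,4$ with the edges $13$ and $24$ crossing. The plan for $\G$ is then to take $k$ disjoint copies of a single crossing pair of edges (a ``plus sign'': four vertices, two edges that cross), placed far apart in the plane and in general position, so that crossings occur only \emph{within} each copy and never \emph{between} copies. This graph has exactly $k$ crossings. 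Now map every copy's crossing pair onto the crossing pair $\{13,24\}$ of $\overline{K}_4$ by sending the four endpoints to $1,2,3,4$ respectively. This is a vertex map that preserves adjacency (each edge of $\G$ goes to an edge of $\overline{K}_4$) and preserves crossings (the one crossing in each copy maps to the crossing $13\times 24$); hence it is a geometric homomorphism $\G\to\overline{K}_4$, giving $X(\G)\le 4$.

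A small subtlety to address is the degenerate cases: for $k=0$ the statement would give a crossing-free graph, but then $X(\G)\le 3$, so I would state the theorem for $k\ge 1$ (or simply note that $k$ disjoint crossings with $k\ge 1$ is the intended construction). One should also double-check that ``$k$ disjoint plus-signs'' can genuinely be realized with vertices in general position and with no spurious inter-copy crossings: this is routine — place the $i$-th copy inside a tiny disk around the point $(i,0)$, so any two copies are separated by a vertical strip and no edge of one copy comes near another. Finally, I would remark that nothing in the construction is delicate about the target: any geometric graph containing a crossing works equally well as a host for a disjoint union of crossings, which is precisely the phenomenon the theorem is meant to illustrate — the number of crossings in $\G$ carries essentially no information about $X(\G)$ beyond the threshold $X(\G)\ge 4$.

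The main (and really only) obstacle is conceptual rather than technical: recognizing that independence of the crossings in $\G$ lets each crossing be handled in isolation and routed to the \emph{same} crossing in a fixed four-vertex host, so that no competition among crossings ever forces a fifth color. Once the disjoint-plus-sign construction is on the table, both the ``exactly $k$ crossings'' count and the homomorphism to $\overline{K}_4$ are immediate, and the lower bound is Theorem~\ref{thm:obs}(b).
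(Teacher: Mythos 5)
Your proof is correct, but your construction differs from the paper's. The paper does not use $k$ disjoint crossings; instead it takes a star $\overline{K}_{1,k}$ with central vertex $0$ and leaves $1,\dots,k$, and crosses all $k$ star edges with a single extra edge $\{k+1,k+2\}$, then maps $0\mapsto 1$, every leaf to $3$, and the extra edge to $\{2,4\}$ in the convex $\K_4$ (with the case $k=1$ handled by $\K_4$ itself). So in the paper all $k$ crossings share one common edge and the images of the $k$ crossing pairs collapse onto the single crossing $\{1,3\}\times\{2,4\}$, whereas you spread the $k$ crossings out as pairwise independent ``plus signs'' and route each one separately onto that same crossing. Both arguments are equally valid and equally short; yours is more modular and makes the independence of the crossings do the work (which fits the theme of Section~\ref{sec:geochrom}), while the paper's example is a single connected gadget showing that even $k$ mutually \emph{dependent} crossings concentrated on one edge force nothing beyond $X(\G)=4$. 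Your explicit treatment of the lower bound via Theorem~\ref{thm:obs}(a),(b) and your remark about excluding $k=0$ are both sound (the paper relies on its earlier observation that one crossing forces four colors, and likewise only treats $k\ge 1$).
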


\begin{proof} Let $\K_4$ be the convex $4$-clique with vertices labeled 1, 2, 3, 4 around the convex hull. If $k=1$, then $\K_4$ is the requested geometric graph.  For $k >1$, start with the star graph $\overline{K}_{1, k}$, with its $k$ edges spanning an angle of $\frac{\pi}{2}$. Label the central vertex 0, and the leaves $1, 2, \ldots, k$. Cross those edges  with a single edge $\{k+1,k+2\}$. Call the resulting graph $\G$. Thus the crossings in $\G$ are the precisely the pairs $\{0,i\}$, $\{k+1,k+2\}$ for $i\in \{1,\ldots, k\}$.  Define $\beta: \G \to \K_4$ by $\beta(0)=1$, $\beta(i) = 3$ for $i\in \{1,\ldots, k\}$, $\beta(k+1)=2$ and $\beta(k+2) = 4$. Thus the edges of the star are mapped to $\{1,3\}$, while the edges of the crossing $K_2$ is mapped to $\{2,4\}$. Thus $\beta:\G \to \K_4$ is a geometric graph homomorphism.\end{proof}

It is essential that we be able to quickly determine, without necessarily visualizing the geometric graph, whether two edges in the graph cross.  The two following lemmas give us explicit tools.  Lemma \ref{lem:convex} will be used numerous times, sometimes implicitly, throughout this paper.\medskip 

\begin{lem}\label{lem:nonconvex} \rm If $S=\{u,v,x,y\}$ is a non-convex set of vertices in the plane, then no pair of edges with both endpoints in $S$ cross.\end{lem}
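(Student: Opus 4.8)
The plan is to reduce to the three ways of splitting $S$ into two disjoint pairs and then invoke the standard description of the interior of a triangle as an intersection of open half-planes.

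First, a pair of edges that cross can share no endpoint, so the only pairs of edges with both endpoints in $S$ that could possibly cross are the three perfect matchings of $S$: namely $\{uv,\,xy\}$, $\{ux,\,vy\}$, and $\{vx,\,uy\}$. Thus it suffices to rule out a crossing for each of these three pairs. Next, since $S$ is non-convex and its four points are in general position (no three collinear), exactly one point of $S$ lies strictly inside the triangle determined by the other three; relabel so that this point is $y$, lying in the interior of triangle $uvx$. The interior of that triangle is the intersection of the three open half-planes bounded by the lines through $uv$, $ux$, $vx$ and containing $x$, $v$, $u$ respectively. Hence $y$ lies strictly on the same side of line $uv$ as $x$, strictly on the same side of line $ux$ as $v$, and strictly on the same side of line $vx$ as $u$.

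Now recall that two segments cross only if each of the two lines spanned by one segment strictly separates the two endpoints of the other. For the matching $\{uv,\,xy\}$: since $x$ and $y$ lie on the same side of line $uv$, that line does not separate the endpoints of $xy$, so $uv$ and $xy$ do not cross. The matchings $\{ux,\,vy\}$ and $\{vx,\,uy\}$ are handled identically, using that $v,y$ lie on the same side of line $ux$ and that $u,y$ lie on the same side of line $vx$, respectively. This exhausts all candidate pairs, so no two edges with both endpoints in $S$ cross.

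The only real content is the observation that $y$ lies in the open triangle $uvx$ if and only if $y$ lies in all three of those open half-planes; the rest is bookkeeping over the three matchings. The point to be careful about is the general-position hypothesis: it is what guarantees that ``non-convex'' places one point \emph{strictly} inside the triangle of the other three (rather than merely on one of its edges) and what makes each of the half-plane separations strict, so that the ``same side'' conclusions genuinely preclude a crossing.
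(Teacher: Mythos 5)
Your proof is correct. The paper disposes of this lemma in one sentence: the geometric $K_4$ built on $S$ is a non-convex realization of $K_4$, and a non-convex $K_4$ has no crossing edges -- i.e., it simply appeals to the standard classification of geometric realizations of $K_4$ rather than arguing from scratch. What you have done is supply a self-contained proof of exactly that underlying fact: you first observe that edges sharing an endpoint cannot cross, so only the three perfect matchings of $S$ need attention, then use general position to place one vertex (say $y$) strictly inside the triangle on the other three, and finally use the half-plane description of the triangle's interior to see that no line through two of $u,v,x$ separates the remaining pair, so no matching yields a crossing. Both routes are sound; the paper's buys brevity by treating the non-convex $K_4$ fact as known, while yours buys completeness, and you are right to flag that general position is what makes ``non-convex'' mean ``strictly interior'' and the separations strict. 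One small refinement: for the matching $\{ux,\,vy\}$ your separation statement about line $ux$ rules it out directly, and symmetrically for $\{vx,\,uy\}$, so the bookkeeping is indeed as routine as you claim.
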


The above is clear since the geometric $\K_4$ built on the vertices in $S$ is a non-convex $\K_4$ and therefore has no crossing edges.\medskip 

\begin{lem}\label{lem:convex} \rm Let $n\geq 4$ vertices in convex position in the plane be labeled $1, 2, \ldots, n$ around their convex hull.  Let $\K_n$ be the convex $n$-clique built on these vertices.  Let $e_1=\{a_1, a_2\}$ and $e_2=\{b_1, b_2\}$ be disjoint edges in $\K_n$.  Without loss of generality, we may assume that $a_1<a_2$, $b_1<b_2$ and $a_1<b_1$.  Then $e_1$ and $e_2$ cross if and only if $a_1<b_1<a_2<b_2$.\end{lem}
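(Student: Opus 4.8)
The plan is to prove the standard characterization of crossings in a convex polygon by analyzing how the endpoints of the two disjoint edges partition the convex hull.

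First I would set up the configuration. We have $n\geq 4$ points in convex position labeled $1,2,\dots,n$ in cyclic (say counterclockwise) order around the hull, and two disjoint edges $e_1=\{a_1,a_2\}$, $e_2=\{b_1,b_2\}$ with $a_1<a_2$, $b_1<b_2$, $a_1<b_1$. Since the four points are distinct and $a_1$ is the smallest, the only possible orderings of the four indices are $a_1<b_1<a_2<b_2$, $a_1<b_1<b_2<a_2$, and $a_1<a_2<b_1<b_2$. The claim is that $e_1$ and $e_2$ cross precisely in the first of these three cases. So the proof reduces to checking each of the three cases.

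Next I would invoke the key geometric fact about convex position: a chord $\{a_1,a_2\}$ of the convex hull divides the open region bounded by the hull into two parts, and the remaining hull vertices strictly between $a_1$ and $a_2$ in cyclic order lie on one side, while those between $a_2$ and $a_1$ (going the other way around) lie on the other side. Two chords of the hull cross in the interior if and only if the endpoints of one chord lie in different ones of the two arcs determined by the other chord — equivalently, the four endpoints alternate around the hull. I would then match each of the three orderings against this criterion: in the case $a_1<b_1<a_2<b_2$, the vertex $b_1$ lies on the arc strictly between $a_1$ and $a_2$ while $b_2$ lies on the complementary arc, so the endpoints of $e_2$ are separated by $e_1$ and the edges cross; in the cases $a_1<b_1<b_2<a_2$ and $a_1<a_2<b_1<b_2$, both $b_1$ and $b_2$ lie on the same arc determined by $e_1$ (respectively the arc between $a_1$ and $a_2$, or the arc between $a_2$ and $a_1$), so $e_2$ does not separate and the edges do not cross. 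One could alternatively phrase the ``alternate around the hull'' criterion combinatorially: $e_1$ and $e_2$ cross iff exactly one of $b_1,b_2$ satisfies $a_1<\,\cdot\,<a_2$; this is immediate to verify against the three orderings.

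I do not expect a serious obstacle here; the only point requiring a little care is justifying the separation criterion itself rather than treating it as visually obvious. If the paper wants a self-contained argument, I would give a short justification: place the hull so that $a_1$ and $a_2$ are joined by a segment, note that by convexity every other hull vertex lies strictly to one side of the line through $a_1a_2$ according to whether it is encountered before or after $a_2$ when traversing the boundary from $a_1$, and then observe that a segment $b_1b_2$ meets the open segment $a_1a_2$ iff $b_1$ and $b_2$ are on opposite sides of that line \emph{and} $a_1,a_2$ are on opposite sides of the line through $b_1b_2$ — both of which translate, via the ordering, exactly into $a_1<b_1<a_2<b_2$. Since general position guarantees no three of the points are collinear, no degenerate ``touching'' cases arise, and the equivalence is clean in both directions.
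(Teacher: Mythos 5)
Your proof is correct and follows essentially the same route as the paper, which simply asserts that chords of a convex polygon cross if and only if their endpoints alternate around the hull; your case analysis of the three possible orderings and your justification of the separation criterion are just a more detailed write-up of that one-line argument.
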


The above is clear since a subset of a convex set is itself convex, and  edges cross if and only if their endpoints alternate as we move around their convex hull.\medskip

\section{Independent Crossings}\label{sec:geochrom}

\begin{defn} \rm In a geometric graph $\G$ we call an edge involved in a crossing a {\it crossing edge} and call a vertex incident to a crossing edge a {\it crossing vertex}. \end{defn}

\begin{defn} \rm 
Two crossings in $\overline{G}$ are said to be at {\em distance $d$} if the minimum distance between any vertex in one crossing and any vertex in the other crossing is $d$.  Two crossings are said to be {\em independent} if they have no vertices in common. That is, a two crossings are independent if they are at distance at least 1. \end{defn}

Notice that if all crossings in a geometric graph are at distance at least 2, then each crossing vertex is adjacent to exactly one other crossing vertex -- the other endpoint of the  crossing edge to which it is incident.  If all crossings are at distance at least 1, then a crossing vertex may be adjacent to many other crossing vertices -- only one of which is the other endpoint of the single crossing edge to which it is incident.  Other crossing vertices to which it may be adjacent are crossing vertices from other crossings that are at distance 1 from it.  Figure \ref{Fig:distance} shows examples of these situations.\medskip

 \begin{figure}[hbt]
 \centering
 \scalebox{0.3}
 {
 \begin{tikzpicture}[scale=4]
	 \tikzstyle{vertex}=[circle,minimum size=20pt,inner sep=0pt, fill=black!]
	 \tikzstyle{edge} = [draw,line width=2pt,-]
	 \node[vertex] (v00) at (0,0){};
	 \node[vertex] (v10) at (1,0){};
	 \node[vertex] (v01) at (0,1){};
	 \node[vertex] (v11) at (1,1){};
	 \node[vertex] (v21) at (2,1){};
	 \node[vertex] (v31) at (3,1){};
	 \node[vertex] (v41) at (4,1){};
	 \node[vertex] (v30) at (3,0){};
	 \node[vertex] (v40) at (4,0){};
	 \draw[edge] (v00) -- (v11) -- (v21) -- (v31) -- (v40);
	 \draw[edge] (v01) -- (v10);
	 \draw[edge] (v30) -- (v41);
	 \node[vertex] (v60) at (6,0){};
	 \node[vertex] (v70) at (7,0){};
	 \node[vertex] (v80) at (8,0){};
	 \node[vertex] (v90) at (9,0){};
	 \node[vertex] (v61) at (6,1){};
	 \node[vertex] (v71) at (7,1){};
	 \node[vertex] (v81) at (8,1){};
	 \node[vertex] (v91) at (9,1){};
	 \draw[edge] (v60) -- (v71) -- (v81) -- (v90);
	 \draw[edge] (v61) -- (v70);
	 \draw[edge] (v91) -- (v80);
 \end{tikzpicture}}
 \caption{At left: crossings at distance 2; at right: crossings at distance 1}
 \label{Fig:distance}
 \end{figure}
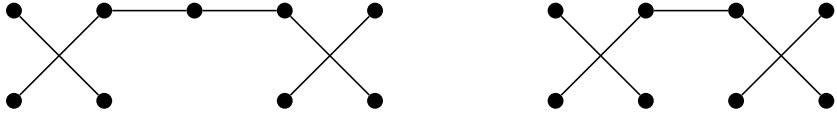

\begin{theorem}\label{thm:dist2}\rm
If all pairs of crossings in $\overline{G}$ are at distance at least 2, then $X(\overline{G}) \le \chi(G) + 2$.
\end{theorem}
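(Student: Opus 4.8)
Start with a proper coloring $c : G \to \{1, \dots, \chi(G)\}$ of the underlying abstract graph, and add two fresh colors, call them $\alpha$ and $\beta$, for a total of $\chi(G)+2$ colors. Since all crossings are at distance at least $2$, each crossing vertex is adjacent to exactly one other crossing vertex, namely the other endpoint of its unique crossing edge; in particular the crossing edges form a partial matching $M$ on the crossing vertices. The idea is to recolor the endpoints of the crossing edges using $\alpha$ and $\beta$ so that (i) the new coloring is still a proper vertex coloring, and (ii) the resulting partition into $\chi(G)+2$ color classes can be geometrically realized as a convex $\overline{K}_{\chi(G)+2}$ to which $\overline{G}$ maps.

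First I would set up the recoloring: for each crossing edge $\{u,v\} \in M$, recolor $u$ with $\alpha$ and $v$ with $\beta$ (choosing the labels consistently — this is where the distance-$2$ hypothesis is crucial, since no crossing vertex is adjacent to a crossing vertex outside its own matching edge, so two endpoints getting the same new color are never adjacent). Thus all $\alpha$-colored and $\beta$-colored vertices are crossing vertices, every crossing edge runs between the $\alpha$-class and the $\beta$-class, and the original $\chi(G)$ classes now contain only non-crossing vertices (their crossing vertices having been moved out). Check that this is a proper coloring: within a color class $i \le \chi(G)$ nothing changed except deletions; within the $\alpha$-class or $\beta$-class, any two vertices are endpoints of distinct crossing edges, hence at distance $\ge 2$, hence non-adjacent.

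Next I would build the target geometric clique. Place $\chi(G)+2$ points in convex position, labeled $1, 2, \dots, \chi(G), \alpha, \beta$ around the hull, with $\alpha$ and $\beta$ \emph{adjacent} on the hull (consecutive positions). Let $\overline{K}_{\chi(G)+2}$ be the convex clique on these points, and let $f : \overline{G} \to \overline{K}_{\chi(G)+2}$ be the map induced by the recoloring. Adjacency is preserved because the recoloring is proper. For crossings: every crossing edge of $\overline{G}$ maps to the single edge $\{\alpha, \beta\}$ of the target. But in a convex clique, a single edge crosses nothing, so $f$ does \emph{not} preserve the crossings — this is the obstacle, and the naive two-color approach fails exactly here. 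The fix is to \emph{not} collapse all crossing edges to one edge: instead use $\alpha$ and $\beta$ more cleverly, or realize that we have freedom in how the non-crossing part is drawn. The actual approach I would pursue is to keep the $\alpha$/$\beta$ recoloring for the non-crossing structure but handle each crossing edge by mapping its two endpoints to \emph{two} of the new vertices while still using only two new colors overall — impossible with a simple vertex map — so instead map each crossing edge of $\overline{G}$ injectively onto an edge of a designated "crossing gadget" inside $\overline{K}_{\chi(G)+2}$: reserve the two new vertices $\alpha,\beta$ and also exploit that we may route the image so that the edge $\{\alpha,\beta\}$ crosses edges of the underlying convex $\overline{K}_{\chi(G)}$-part. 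Concretely, position the $\chi(G)$ "color" points and the two new points so that segment $\alpha\beta$ separates the color points into two nonempty arcs; then $\{\alpha,\beta\}$ crosses every chord joining the two arcs. Since in $\overline{G}$ each crossing edge is crossed only by edges at distance $0$ from it (its crossing partners) and all \emph{other} crossings are far away, and since distinct crossings are independent, the crossing edges of $\overline{G}$ pairwise do not cross each other; so it suffices that each crossing edge's image (the edge $\alpha\beta$, or a chord meeting it) be crossed by the image of whatever crosses it in $\overline{G}$ and by nothing forbidden.

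**The main obstacle.** The genuine difficulty is reconciling "use only two extra colors" with "each crossing must be witnessed by two genuinely crossing image edges." I expect the resolution to be: recolor so that for each crossing $\{u,v\}\times\{x,y\}$ (an independent crossing involves four vertices), the pair $u,x$ get colors from $\{\alpha,\beta,\,\text{old colors}\}$ arranged so their images alternate around the convex hull of $\overline{K}_{\chi(G)+2}$. Because crossings are at distance $\ge 2$, the four vertices $u,v,x,y$ of one crossing are non-adjacent to the four vertices of any other crossing, so one can independently assign to the two \emph{edges} of each crossing the color-pairs $\{\alpha, p\}$ and $\{\beta, q\}$ for suitable old colors $p,q$, then arrange $\alpha,\beta$ and the old color points in convex position so that $\alpha < p < \beta < q$ (Lemma~\ref{lem:convex}) makes the images cross. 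Verifying that these simultaneous constraints over all crossings are mutually consistent — i.e.\ that a single cyclic order of the $\chi(G)+2$ points works for every crossing at once, and that no two images coincide in a bad way — is the crux, and I would handle it by showing the distance-$\ge 2$ condition makes the constraints "local" and independent, so any fixed convex order with $\alpha,\beta$ placed to bisect the color points suffices.
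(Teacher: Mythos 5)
Your high-level strategy is the right one, and it is essentially the paper's: extend a proper $n$-coloring ($n=\chi(G)$) to a convex $\overline{K}_{n+2}$, recolor a small number of vertices in each crossing to the two new colors, and use the distance-$\ge 2$ hypothesis to argue that distinct crossings share no vertices and have no adjacencies between them, so the local recolorings never conflict. That decoupling observation is correct and is exactly how the paper justifies treating crossings one at a time.

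However, there is a genuine gap: you never actually carry out the step that constitutes the proof. The whole content of the theorem is the verification that \emph{every} possible configuration of the image edges $\alpha(e_1),\alpha(e_2)$ in $K_n$ --- two disjoint non-crossing edges, two incident edges, or a single common edge --- can be converted into a crossing pair in the convex $\overline{K}_{n+2}$ by moving at most one vertex per edge to $n+1$ or $n+2$. You explicitly flag this as ``the crux'' and then assert, without checking, that ``any fixed convex order with $\alpha,\beta$ placed to bisect the color points suffices'' via the pattern $\alpha<p<\beta<q$. As stated this fails: if all four original colors of a crossing lie on the same side of your bisecting chord, no choice of $p,q$ realizes that particular alternation, and you would need to fall back on other alternation patterns (or a different placement of the two new vertices) --- precisely the case analysis you skipped. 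The paper resolves this by placing $n+1,n+2$ consecutively on the hull and running five explicit cases (disjoint images with the new vertices between the edges or between the endpoints of one edge; incident images, two subcases; identical images), in each case exhibiting a recoloring whose images alternate by Lemma~\ref{lem:convex}. Your first concrete attempt (send every crossing edge to $\{\alpha,\beta\}$) fails for the reason you identify, and the intermediate discussion of ``routing'' and ``crossing gadgets'' is not meaningful for straight-line drawings; the proposal ends as a plan rather than a proof. To complete it, fix a single cyclic placement of the two new vertices and verify, configuration by configuration, that a valid recoloring of at most one vertex per edge always exists.
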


\begin{proof}  Suppose $\chi(G) = n$. Let $\K_{n+2}$ be a convex geometric $(n+2)$-clique with vertices labeled $1,\ldots, n+2$.  Associate the complete subgraph induced by the  set $\{1, 2, \ldots, n\}$ with  $K_n$.  By assumption on $\chi(G)$, there is a graph homomorphism $\alpha:G \to K_n$. \medskip

We will modify $\alpha:G\to K_n$ to a homomorphism $\beta:G\to K_{n+2}$ so that $\beta$ takes crossing of $\G$ to crossings in $\K_{n+2}$. We will only modify $\alpha$ on crossing vertices.; the images of non-crossing vertices will be the same under $\beta$ and $\alpha$. The modified homomorphism $\beta$ will map some crossing vertices to one of $n+1,n+2$.  The remaining crossing vertices in that crossing will maintain their image under $\alpha$. Since $K_{n+2}$ has edges between all pairs of vertices, as long as the modification of $\alpha$ to $\beta$ maintains the property that no edge has endpoints mapped to the same vertex, $\beta$ will be a graph homomorphism. With wise choices justified by Lemma \ref{lem:convex}, we can ensure that the resulting graph homomorphism $\beta:\G \to \K_{n+2}$ is a geometric graph homomorphism.\medskip

Since pairs of crossing edges are at distance at least 2, they are independent. Thus no two crossing share a vertex.  Therefore, if we modify $\alpha$ on crossing vertices in one crossing, it does not affect any modification we may have previously made on another crossing.  Further, again since pairs of crossing edges are at distance at least 2, each crossing vertex in $\G$ is adjacent to exactly one other crossing vertex - the other endpoint of its crossing edge. Thus crossing vertices from distinct crossings are nonadjacent.  Thus in defining $\beta$, we run no risk of mapping two crossing vertices from different crossings to the same vertex in $\{n+1, n+2\}$. These facts allow us to modify our homomorphism $\alpha:G\to K_n$ to a new homomorphism $\beta:\G\to \K_{n+2}$ one crossing at a time.\medskip 

Suppose $e_1=\{u,v\}$ and $e_2=\{x,y\}$ are a pair of crossing edges in $\G$. Note that since $\alpha$ is a graph homomorphism, and $K_n$ is a simple graph, $\alpha(x)\ne \alpha(y)$ and $\alpha(u) \ne \alpha(v)$. Under the homomorphism $\alpha$, edge $\alpha(e_1)$ may have 0, 1 or 2 vertices in common with edge $\alpha(e_2)$.  Thus the vertices $\alpha(u), \alpha(v), \alpha(x), \alpha(y)$ either induce  Case 1): two non-incident edges, or Case 2): two incident edges, or Case 3): a single edge in $K_n$.\medskip 

\noindent{\bf Case 1)} Suppose the edges are non-incident.  If $\alpha(e_1), \alpha(e_2)$ cross in $\K_n$, then we need not modify $\alpha$ on the vertices of this crossing pair.   If the disjoint edges $\alpha(e_1), \alpha(e_2)$ don't cross in $\K_n$ then, without loss of generality, the two possible relationships between these edges and the additional vertices, $n+1$ and $n+2$, are illustrated in Figure \ref{Cases1ab}.  That is, either Case 1a): the additional vertices fall between vertices of the two edges, without loss of generality say between $\alpha(y)$ and $\alpha(u)$, or Case 1b): the additional vertices fall between the endpoints of one of the edges, without loss of generality say between $\alpha(x)$ and $\alpha(y)$.\medskip 

{\bf Case 1a)} Referring to Figure \ref{Cases1ab}, $1\leq \alpha(u) < \alpha(v) < \alpha(x) < \alpha(y) \leq n$. Modify the mapping so that $\beta(v)= n+1$ and $\beta(x)=n+2$, while $\beta=\alpha$ on $u$ and $y$.  Then $\beta(u)<\beta(y)<\beta(v) <\beta(x)$ and therefore $\beta(e_1)$ and $\beta(e_2)$ cross in $\K_{n+2}$.\medskip 

{\bf Case 1b)} Referring to Figure \ref{Cases1ab}, $1\leq \alpha(y) < \alpha(u) < \alpha(v) < \alpha(x)\leq n$. Modify the mapping so that $\beta(v)= n+1$, while $\beta=\alpha$ on $u,x$ and $y$.  Then $\beta(y)< \beta(u)<\beta(x)<\beta(v)$ and therefore $\beta(e_1)$ and $\beta(e_2)$ cross in $\K_{n+2}$.\medskip

 \begin{figure}[hbt]
 \centering
 \scalebox{0.6}
 {\begin{tikzpicture}[scale=4]
	 \tikzstyle{vertex}=[draw, circle, thick, fill = black] 
	 \tikzstyle{edge} = [draw,line width=1.25pt,-]
	 \node[above] at (1.25,2.3){Case 1a)};
	 \node[above] at (1.25,2.15){Add'l Vertices};
	 \node[above] at (1,1.95){\phantom{aasdfassdf} \Large  $\cdots$};
	 \node[left] at (-.5,.85){Case 1b)};	
	 \node[left] at (-.35,.70){Add'l Vertices};	
	 \node[left] at (.055,.775){ \phantom{assdasdf} \Large $\vdots$};
	 \node[right,color=white] at (3.1,.75){this};
	 \node[vertex] (x) at (.5,0){};
	 \node[left] at (.5,0){\small $\alpha(x) \ $};
	 \node[vertex] (x) at (.5,0){};
	 \node[left] at (.5,0){\small $\alpha(x) \ $};
	 \node[vertex] (v) at (2,0){};
	 \node[left] at (2,0){\small $\alpha(v) \ $};
	 \node[vertex] (y) at (.5,1.5){};
	 \node[left] at (.5,1.5){\small $\alpha(y) \ $};
	 \node[vertex] (u) at (2,1.5){};
	 \node[left] at (2,1.5){\small $\alpha(u) \ $};
	 \node[vertex] (1up) at (1,2){};
	 \node[vertex] (2up) at (1.5,2){};
	 \node[vertex] (1left) at (0,.5){};
	 \node[vertex] (2left) at (0,1){};
	 \draw[edge] (u) -- (v);
	 \draw[edge] (x) -- (y);
 \end{tikzpicture}}
 \caption{Cases 1a) \& 1b)}
 \label{Cases1ab}
 \end{figure}
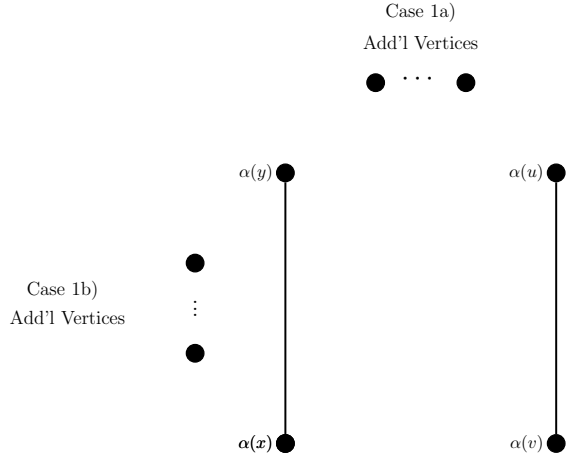
 
\noindent{\bf Case 2)} Suppose $\alpha(e_1)$ and $\alpha(e_2)$ are distinct but incident edges in $K_n$.  In this case, without loss of generality, assume that $\alpha(v)=\alpha(x)$. The two possible relationships between the associated crossing vertices and the additional vertices, $n+1$ and $n+2$, are illustrated in Figure \ref{Cases2ab}. That is, either Case 2a): the additional vertices fall between leaves of the path, say between $\alpha(y)$ and $\alpha(u)$, or Case 2b): they fall between the endpoints of one of the edges, say between $\alpha(x)$ and $\alpha(y)$.\medskip   
 
{\bf Case 2a)}  Referring to Figure \ref{Cases2ab}, $1\leq \alpha(u) < (\alpha(v) =\alpha(x)) < \alpha(y) \leq n$. Modify the mapping so that $\beta(v)= n+1$ and $\beta(y)= n+2$, while $\beta=\alpha$ on $u$ and $x$. Then $\beta(u)< \beta(x)<\beta(v)<\beta(y)$ and therefore $\beta(e_1)$ and $\beta(e_2)$ cross in $\K_{n+2}$.\medskip

{\bf Case 2b)} Referring to Figure \ref{Cases2ab}, $1\leq \alpha(y) < \alpha(u) < (\alpha(v) =\alpha(x))  \leq n$.  Modify the mapping so that $\beta(v) = n+1$, while $\beta=\alpha$ on $u,x$ and $y$. Then $\beta(y) < \beta(u) < \beta(x) < \beta(v)$ and therefore $\beta(e_1)$ and $\beta(e_2)$ cross in $\K_{n+2}$\medskip

\begin{figure}[hbt]
 \centering
 \scalebox{0.6}
 {
 \begin{tikzpicture}[scale=4]
	 \tikzstyle{vertex}=[draw, circle, thick, fill = black] 
	 \tikzstyle{edge} = [draw,line width=1.25pt,-]
	 
	 \node[right,color=white] at (-.8,.75){this};
	  \node[above] at (-2.75,2.32){Case 2a)};
	 \node[above] at (-2.75,2.2){Add'l Vertices};
	 \node[above] at (-2.959,1.93){\phantom{aasdfassdf} \Large  $\cdots$};
	 \node[left] at (-4.25,.85){Case 2b)};	
	 \node[left] at (-4.25,.70){Add'l Vertices};	
	 \node[left] at (-3.95,.79){\phantom{aasdfassdf} \Large  $\vdots$};
	 \node[vertex] (xv) at (-2.68,0){};
	 \node[below] at (-2.5,-.1){\small $\alpha(v)=\alpha(x)$ \phantom{adasdcsf}};
	 \node[vertex] (y) at (-3.5,1.5){};
	 \node[left] at (-3.5,1.5){\small $\alpha(y) \ $};
	 \node[vertex] (u) at (-2,1.5){};
	 \node[left] at (-2,1.5){\small $\alpha(u) \ $};
	 \node[vertex] (1up) at (-3,2){};
	 \node[vertex] (2up) at (-2.5,2){};
	 \node[vertex] (1left) at (-4,.5){};
	  \node[vertex] (2left) at (-4,1){};
	 \draw[edge] (u) -- (xv);
	 \draw[edge] (xv) -- (y);
 \end{tikzpicture}}
 \caption{Cases 2a) \& 2b)}
 \label{Cases2ab}
 \end{figure}

\noindent {\bf Case 3)} Suppose $\alpha(e_1)=\alpha(e_2)$. In this case, without loss of generality, assume $\alpha(u)=\alpha(y)$ and $\alpha(v)=\alpha(x)$, and $\alpha(u) < \alpha(v)$. The relationship of their crossing vertices with the additional vertices, $n+1$ and $n+2$, is illustrated in Figure \ref{Case5}. \medskip 

  \begin{figure}[hbt]
 \centering
 \scalebox{0.6}
 {
 \begin{tikzpicture}[scale=4]
	 \tikzstyle{vertex}=[draw, circle, thick, fill=black] 
	 \tikzstyle{edge} = [draw,line width=1.25pt,-]
	 
	 \node[right,color=white] at (-2,.75){this};
	 
	 \node[left] at (-4.25,.85){Case 3)};	
	 \node[left] at (-4.25,.70){Add'l Vertices};	
	 \node[left] at (-3.95,.8){\phantom{aasdfassdf} \Large  $\vdots$};
	 \node[vertex] (xv) at (-3.32,0){};
	 \node[below] at (-3.25,-.1){\small $\alpha(v)=\alpha(x)$ \phantom{ada}};
	 \node[vertex] (uy) at (-3.32,1.5){};
	 \node[above] at (-3.25,1.6){\small $\alpha(u)=\alpha(y)$ \phantom{ada}};
	 \node[vertex] (1left) at (-4,.5){};
	 \node[vertex] (2left) at (-4,1){};
	 \draw[edge] (uy) -- (xv);
 \end{tikzpicture}}
 \caption{Case 3)}
 \label{Case5}
 \end{figure}

 Modify the mapping so that  $\beta(y)=n+2$, and $\beta(v)={n+1}$, while $\beta=\alpha$ on $u$ and $x$. Then $\beta(u)<\beta(x)<\beta(v)<\beta(y)$, and thus $\beta(e_1)$ crosses $\beta(e_2)$.\medskip  

Thus after modifying $\alpha$ to $\beta$ on each crossing, we've defined $\beta:\G\to \K_{n+2}$ as a geometric graph homomorphism.  Thus $X(\G)\leq \chi(G)+2$.\end{proof}

There are easy examples of geometric graphs where this bound is not best possible.  A plane geometric graph, and a  convex $\K_4$ fall into this category.  It would be interesting to find criteria for when this result is best possible.\medskip

Now we move to considering geometric graphs in which all crossings are independent, but not necessarily at distance at least 2. The technique used in the proofs of Theorems \ref{thm:chinX2n} and \ref{thm:chinX3n} is similar to the one used in the proof of Theorem \ref{thm:dist2}.  The difference is that in Theorem \ref{thm:dist2} since the minimum distance between two pairs of crossing edges was 2, no pair of crossing vertices from distinct edge crossings could be adjacent.  Thus there was no risk that in modifying $\alpha$ to achieve $\beta$ on distinct crossing pairs we might map adjacent crossing vertices to the same vertex in our geometric clique.  When crossings are independent but not of distance at least 2, we have to be a bit more careful.  In this situation, crossing vertices from distinct crossings may indeed be adjacent.  One solution to this is to base the modification of $\alpha(v)$ on the image $\alpha(v)$ itself. In this technique, for all $v\in V(G)$ we will define $\beta(v)$ to be in $\{\alpha(v), \alpha(v)+n, \alpha(v)+2n\}$.  Since $\alpha:G\to K_n$ is a homomorphism,  if $u,v$ are adjacent vertices in $G$, then $\alpha(v)\ne \alpha(u)$, and each is between $1$ and $n$. Then the set of the potential images $\beta(u)$ and $\beta(v)$ are $\{\alpha(u), \alpha(u)+n, \alpha(u)+2n\}$ and $\{\alpha(v), \alpha(v)+n, \alpha(v)+2n\}$.  Since these sets are disjoint,  $\beta(v)\ne \beta(u)$ as desired. This is the key behind the following proof. 


\begin{theorem}\label{thm:chinX2n} \rm Let $\G$ be a geometric graph with independent crossings.  Let $n$ be an integer so that there exists a homomorphism $\alpha: G \to K_n$ in which no pair of crossing edges are mapped to the same edge. Then $X(G) \leq 2n$.
\end{theorem}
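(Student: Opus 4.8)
The plan is to mimic the construction in Theorem \ref{thm:dist2}, but using the ``offset by a multiple of $n$'' device flagged in the paragraph before the statement, so that adjacency is automatically preserved even between crossing vertices of distinct (but independent) crossings. Start with the hypothesized homomorphism $\alpha:G\to K_n$ in which no two crossing edges of $\G$ have the same image edge. Take a convex geometric clique $\K_{2n}$ on vertices $1,\dots,2n$, and think of the labels as lying in two consecutive blocks around the convex hull: block $A=\{1,\dots,n\}$ and block $B=\{n+1,\dots,2n\}$, where vertex $\alpha(v)$ in $A$ sits ``across'' from vertex $\alpha(v)+n$ in $B$. We will define $\beta(v)\in\{\alpha(v),\alpha(v)+n\}$ for every $v$; as observed in the paragraph preceding the theorem, this guarantees $\beta(u)\ne\beta(v)$ whenever $uv\in E(G)$, so $\beta$ is automatically a graph homomorphism $G\to K_{2n}$ no matter how we choose the ``block'' for each vertex. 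The only remaining task is to choose the blocks so that every crossing of $\G$ maps to a crossing of $\K_{2n}$.

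The heart of the argument is therefore a local, per-crossing choice. Fix a crossing with edges $e_1=\{u,v\}$ and $e_2=\{x,y\}$. Since the crossings are independent, no vertex is shared between two crossings, so the block choices made at different crossings never conflict; non-crossing vertices can be placed in block $A$ (i.e.\ $\beta=\alpha$) with no effect. By hypothesis $\alpha(e_1)\ne\alpha(e_2)$ as edges of $K_n$, so the four labels $\alpha(u),\alpha(v),\alpha(x),\alpha(y)$ induce in $K_n$ either two disjoint edges (Case 1) or two incident edges sharing one vertex (Case 2) --- the ``single edge'' Case 3 of Theorem \ref{thm:dist2} is now excluded, which is precisely what the hypothesis buys us. In each case I will exhibit an explicit block assignment for $u,v,x,y$ making the images alternate around the hull of $\K_{2n}$. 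Concretely, using Lemma \ref{lem:convex}: in the disjoint case, if $\alpha(e_1),\alpha(e_2)$ already cross in $K_n$ leave all four in block $A$; otherwise order them, say $a<b<c<d$ with $\{a,b\}$ one edge and $\{c,d\}$ the other, and push exactly the vertices labelled $b$ and $c$ into block $B$, so their new labels are $b+n,c+n$, and then the four labels in cyclic order are $a<c<b+n<d$, hence alternating, so the edges cross. In the incident case, say $\alpha(v)=\alpha(x)=:m$ and $\alpha(u)<m<\alpha(y)$: push the shared vertex ``$v=x$'' --- wait, they are two different vertices of $G$ mapped to the same label; push $v$ (not $x$) and $y$ into block $B$, giving labels $\alpha(u)<m<m+n<\alpha(y)+n$, again alternating. (The boundary sub-case $\alpha(u)<\alpha(y)<m$ is handled symmetrically by pushing only $v$.) One must double-check each sub-case against Lemma \ref{lem:convex} and against the requirement $\beta(e_1),\beta(e_2)$ remain genuine non-incident edges of $\K_{2n}$ so that ``cross'' is meaningful; since $u,v,x,y$ are four distinct vertices of $G$ involved in an actual crossing of $\G$, their images under $\beta$ are four distinct labels (the block trick keeps images of adjacent vertices distinct, and one checks the non-adjacent pairs too), so the edges are indeed disjoint in $\K_{2n}$.

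The main obstacle I anticipate is purely bookkeeping: making sure that the per-crossing block assignments are globally consistent, i.e.\ that no vertex is asked to be in block $A$ by one crossing and block $B$ by another. Independence of crossings kills this immediately --- each crossing vertex lies in exactly one crossing --- so once that is stated cleanly, the rest is a finite case check essentially identical in spirit to Cases 1 and 2 of Theorem \ref{thm:dist2}, just with ``$n+1,n+2$'' replaced by ``$\alpha(v)+n$'' and the alternation verified via Lemma \ref{lem:convex}. A secondary point worth a sentence: verify that $\beta(u)\ne\beta(v)$ for the specific pairs in each sub-case not only when $uv\in E(G)$ but whenever we need the image edges to be disjoint; this follows because the two source edges are disjoint in $\G$ and the offsets are chosen within $\{0,n\}$, so collisions would force $\alpha$ to identify two endpoints of a crossing, contradicting either simplicity of $K_n$ or the ``distinct image edge'' hypothesis. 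Assembling these local maps yields a geometric homomorphism $\beta:\G\to\K_{2n}$, whence $X(\G)\le 2n$.
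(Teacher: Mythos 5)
Your overall strategy is exactly the paper's: offset selected crossing vertices by $+n$ inside a convex $\K_{2n}$, use $\beta(v)\in\{\alpha(v),\alpha(v)+n\}$ to get adjacency-preservation for free, use independence of crossings to localize the choices, and observe that the hypothesis on $\alpha$ eliminates the ``identical image edge'' case. However, your explicit case check has a genuine hole in the disjoint case. When $\alpha(e_1)$ and $\alpha(e_2)$ are disjoint and non-crossing in $\K_n$, there are \emph{two} configurations, not one: the separated one, with edges $\{a,b\}$ and $\{c,d\}$ for $a<b<c<d$, and the nested one, with edges $\{a,d\}$ and $\{b,c\}$. Your recipe (``order them $a<b<c<d$ with $\{a,b\}$ one edge and $\{c,d\}$ the other, push $b$ and $c$'') presupposes the separated configuration and works there (the resulting order is $a<d<b+n<c+n$, not the $a<c<b+n<d$ you wrote, but it does alternate). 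In the nested configuration that recipe cannot even be applied as stated, and pushing the two middle labels would yield $\{a,d\}$ and $\{b+n,c+n\}$ in the order $a<d<b+n<c+n$, which does \emph{not} alternate. The fix is a different assignment --- push only the larger of the two inner labels, which is the paper's Case 1b --- but as written your proof fails on nested images.

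There is a second, smaller slip in the incident case. In your ``boundary sub-case'' $\alpha(u)<\alpha(y)<m$ (with $m=\alpha(v)=\alpha(x)$), pushing only $v$ gives image edges $\{\alpha(u),m+n\}$ and $\{\alpha(y),m\}$ in the order $\alpha(u)<\alpha(y)<m<m+n$; these are nested, hence non-crossing. The symmetric move you intend requires pushing $x$ (the copy of the shared label lying on the edge whose leaf has the \emph{middle} label), not $v$; with that correction the labels alternate as $\alpha(u)<\alpha(y)<m<m+n$ with edges $\{\alpha(u),m\}$ and $\{\alpha(y),m+n\}$, matching the paper's Case 2b. Everything else --- the adjacency argument, the role of independence, the exclusion of Case 3 --- is sound and coincides with the paper's proof; what is missing is a complete and correct enumeration of the per-crossing subcases, which is precisely where the content of this theorem lives.
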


\begin{proof}  Let $\K_{2n}$ be a convex geometric $2n$-clique with vertices labeled $1,\ldots, 2n$ around the convex hull.  Associate the complete subgraph induced on the convex set $\{1, 2, \ldots, n\}$ with  $K_n$.  By assumption, there is a graph homomorphism $\alpha:G \to K_n$ in which no pair of  edges that cross in $\G$ are mapped to the same edge in $K_n$.  We will modify this to a geometric graph homomorphism $\beta: \G \to \K_{2n}$.\medskip 

For this proof we will define $\beta:G\to K_{2n}$ by adding $n$ to images under $\alpha$ of some crossing vertices. As argued earlier,  for any $u,v \in V(G)$ for which $\alpha(u) \ne \alpha(v)$, with $\beta(u)\in\{\alpha(u), \alpha(u)+n\}$ and $\beta(v)\in\{\alpha(v), \alpha(v)+n\}$, we have $\beta(u)\ne \beta(v)$. Thus since $\alpha$ is a graph homomorphism and $\beta$ maintains inequality of vertex images, $\beta$ is also a graph homomorphism.\medskip

Let $e_1=\{u,v\}, e_2=\{x,y\}$ be crossing edges in $\G$.  By our assumption on $\alpha$, it sends no pair of crossing edges in $\G$ to the same edge of $\K_n$.  Then $\alpha(e_1)$ and $\alpha(e_2)$ share at most one vertex. Thus we have two cases, Case 1): $\alpha(e_1)$ and $\alpha(e_2)$ are disjoint,  or Case 2): $\alpha(e_1)$ and $\alpha(e_2)$ share precisely one vertex.  These are identical to Case 1) and Case 2) of the proof of Theorem \ref{thm:dist2}.  Further, the relationships between the additional vertices $n+1, \ldots, 2n$ and the vertices of $\alpha(e_1)$ and $\alpha(e_2)$ fall into exactly the same subcases here as they did in the proof of Theorem \ref{thm:dist2}.\medskip

{\bf Case 1a)} Referring to Figure \ref{Cases1ab} we see that $1\leq \alpha(u) < \alpha(v) < \alpha(x) < \alpha(y)\leq n$. Define $\beta$ so that $\beta(v)=\alpha(v)+n$ and $\beta(x)=\alpha(x)+n$, while $\beta=\alpha$ on $u$ and $y$.  Then $\beta(u)<\beta(y)<\beta(v) <\beta(x)$ and therefore $\beta(e_1)$ and $\beta(e_2)$ cross.\medskip

{\bf Case 1b)} Referring to Figure \ref{Cases1ab} we see that $1\leq \alpha(y) < \alpha(u) < \alpha(v) < \alpha(x)\leq n$. Define $\beta$ so that $\beta(v)=\alpha(v)+n$, while $\beta=\alpha$ on $u, x$ and $y$.  Then $\beta(y)< \beta(u)<\beta(x)<\beta(v)$ and therefore $\beta(e_1)$ and $\beta(e_2)$ cross.\medskip  

{\bf Case 2a)} Referring to Figure \ref{Cases2ab}, we see that $1\leq \alpha(u)< (\alpha(v)=\alpha(x))<\alpha(y)\leq n$. Define $\beta(v)=\alpha(v)+n$ and $\beta(y)=\alpha(y)+n$ while $\beta=\alpha$ on $u$ and $x$.  Then $\beta(u) < \beta(x) < \beta(v) < \beta(y)$ and therefore $\beta(e_1)$ and $\beta(e_2)$ cross.\medskip

{\bf Case 2b)}  Referring to Figure \ref{Cases2ab}, we see that  $1\leq \alpha(y) < \alpha(u)< (\alpha(v)=\alpha(x))\leq n$. Define $\beta(v)=\alpha(v)+n$, while $\beta=\alpha$ on $u, x$ and $y$.  Then $\beta(y)< \beta(u)<\beta(x)<\beta(v)$ and therefore $\beta(e_1)$ and $\beta(e_2)$ cross.\medskip

Thus $\beta:\G\to \K_{2n}$ is a geometric graph homomorphism.\end{proof}

Note that the above technique does not always work if a pair of crossing edges are mapped to a single edge in $K_n$ by the homomorphism $\alpha$.  For example, if a pair of crossing edges are mapped to $\{2,4\}$ in $\K_{10}$ and we added $n=5$ to a subset of $\alpha(u), \alpha(v), \alpha(x)$ and $\alpha(y)$, the only valid results would be either a single edge, a path of length 2, or two disjoint edges.  In particular, the result cannot be a crossing pair in $\K_{10}$. However, if $\chi(G) = 2$ or $3$ this will not be a problem.  This is stated as Theorem \ref{thm:2n3}. 
 But first, the following theorem gives slightly weaker conclusion in the more general case where we allow homomorphisms that map crossing edges in $\G$ to the same edge in $K_n$. 


\begin{theorem}\label{thm:chinX3n} \rm If $\G$ is a geometric graph with independent crossings, then $X(G) \leq 3\chi(G)$.
\end{theorem}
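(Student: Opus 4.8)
The plan is to adapt the proof of Theorem \ref{thm:dist2} using the three-way residue trick described in the paragraph preceding the statement. Put $n=\chi(G)$ and fix a homomorphism $\alpha:G\to K_n$; realize $\overline{K}_{3n}$ as a convex clique on $1,\dots,3n$ with $K_n$ identified with the subclique on $\{1,\dots,n\}$. I will define $\beta:\overline{G}\to\overline{K}_{3n}$ by leaving $\beta=\alpha$ on non-crossing vertices and choosing, for each crossing vertex $v$, a value $\beta(v)\in\{\alpha(v),\alpha(v)+n,\alpha(v)+2n\}$. As already observed in the excerpt, for adjacent $u,v$ the sets $\{\alpha(u),\alpha(u)+n,\alpha(u)+2n\}$ and $\{\alpha(v),\alpha(v)+n,\alpha(v)+2n\}$ are disjoint, so $\beta$ keeps images of adjacent vertices distinct and is therefore a graph homomorphism regardless of which representatives are chosen.

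Next I would note the structural consequence of independence: each crossing vertex is incident to exactly one crossing edge and hence belongs to exactly one crossing (two crossings sharing an edge, or sharing a vertex via incident edges, cannot be independent). Thus the choices of $\beta(u),\beta(v),\beta(x),\beta(y)$ for a crossing $\{e_1,e_2\}$ with $e_1=\{u,v\}$, $e_2=\{x,y\}$ can be made one crossing at a time without any crossing vertex being reassigned, and clashes with crossing vertices of other crossings are already precluded by the previous paragraph.

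For a fixed crossing, $\alpha(e_1)$ and $\alpha(e_2)$ share $0$, $1$, or $2$ vertices. When they share $0$ or $1$ vertices the picture is exactly Case~1 (1a, 1b) or Case~2 (2a, 2b) of Theorems \ref{thm:dist2} and \ref{thm:chinX2n}, and I would reuse verbatim the modifications from the proof of Theorem \ref{thm:chinX2n}: each only adds $n$ to some of the four images, so the chosen values lie in the permitted residue sets, and Lemma \ref{lem:convex} certifies $\beta(e_1)$ crosses $\beta(e_2)$ in $\overline{K}_{3n}$. The genuinely new case is $\alpha(e_1)=\alpha(e_2)$, say $\alpha(u)=\alpha(y)=a$ and $\alpha(v)=\alpha(x)=b$ with $1\le a<b\le n$. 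Here set $\beta(u)=a$, $\beta(x)=b$, $\beta(v)=b+n$, $\beta(y)=a+2n$. Since $a\ge1$ and $b\le n$ one checks $a<b<b+n<a+2n$, so the endpoints of $\beta(e_1)=\{a,b+n\}$ and $\beta(e_2)=\{b,a+2n\}$ alternate around the convex hull; by Lemma \ref{lem:convex} these edges cross. All four images are distinct and lie in the prescribed residue sets, so the homomorphism property still holds.

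Assembling these per-crossing modifications yields $\beta:\overline{G}\to\overline{K}_{3n}$ preserving adjacency and mapping every crossing of $\overline{G}$ to a crossing of $\overline{K}_{3n}$, hence $X(\overline{G})\le 3n=3\chi(G)$. I do not expect a real obstacle here: the one new ingredient beyond Theorems \ref{thm:dist2} and \ref{thm:chinX2n} is the single explicit assignment for the case $\alpha(e_1)=\alpha(e_2)$, and the only point demanding care is the verification that independence forces each crossing vertex into a unique crossing, so that the crossing-by-crossing modifications never collide.
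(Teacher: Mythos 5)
Your proposal is correct and follows essentially the same route as the paper: reuse the Case 1/Case 2 modifications from Theorem \ref{thm:chinX2n} inside the convex subclique on $\{1,\dots,2n\}$, and handle the new case $\alpha(e_1)=\alpha(e_2)$ with an explicit $+n$, $+2n$ shift producing alternating endpoints (your assignment is the paper's up to swapping the roles of $e_1$ and $e_2$). The supporting observations --- disjointness of the residue sets guaranteeing $\beta$ is a graph homomorphism, and independence forcing each crossing vertex into a unique crossing so the per-crossing modifications never collide --- are exactly the ones the paper relies on.
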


\begin{proof}  Let $\chi(G)=n$. Let $\K_{3n}$ be a convex geometric $3n$-clique with vertices labeled $1,\ldots, 3n$.  Associate the $n$-clique induced by the convex set $\{1, 2, \ldots, n\}$ with  $K_n$.  Since $\chi(G)=n$, there is a graph homomorphism $\alpha:G \to K_n$.  We will modify this to a geometric homomorphism $\beta: \G \to \K_{3n}$.\medskip 

We will redefine $\alpha$ to $\beta$ on certain crossing vertices of $\G$, by adding 0, or $n$ or $2n$ to $\alpha(v)$. With this definition, as argued previously, if $\alpha(u)\ne \alpha(v)$  then $\beta(v)\ne \beta(u)$.  This guarantees that $\beta$ preserves inequality given by $\alpha$, and is thus a graph homomorphism.  With careful choice, we can ensure $\beta:\G \to \K_{3n}$ is a geometric homomorphism.\medskip

Let $e_1=\{u,v\}, e_2=\{x,y\}$ be crossing edges in $\G$.  Note that $\alpha(e_1)$ and $\alpha(e_2)$ are either disjoint, share precisely one vertex, or share both vertices.\medskip

Associate the geometric $2n$-clique induced by the convex set $\{1, 2, \ldots, 2n\}$ with  $\K_{2n}$. If $\alpha(e_1)$ and $\alpha(e_2)$ share at most one vertex, use the method of Theorem \ref{thm:chinX2n} to modify the homomorphism $\alpha:G\to K_n$ to a geometric homomorphism $\beta:\G\to \K_{2n}$ where we identify the convex induced subgraph of $\K_{3m}$ by the vertices $\{1, 2, \ldots, 2n\}$ as $\K_{2n}$. By Theorem \ref{thm:chinX2n}, this modified homomorphism maps crossing edges of $\G$ whose images under $\alpha$ are not identical, to crossing edges in $\K_{2n}\subseteq \K_{3n}$.\medskip

Now consider the one case that is not covered in Theorem \ref{thm:chinX2n}.  Suppose that $\alpha(e_1)=\alpha(e_2)$. Without loss of generality,  suppose that $\alpha(u)=\alpha(y)$ and $\alpha(v)=\alpha(x)$ and $\alpha(u)<\alpha(v)$.  Define $\beta(x)=\alpha(x)+n, \beta(u) = \alpha(u) + 2n$, while $\beta=\alpha$ on $y$ and $v$.  Then $\beta(y)<\beta(v) < \beta (x) < \beta(u)$.  Thus $\beta(e_1)$ and $\beta(e_2)$ cross.\medskip 

Thus after modifying $\alpha$ to $\beta$ on each crossing, we have $\beta: \G \to \K_{3n}$ as a geometric graph homomorphism.\end{proof}

Our final theorem regards geometric graphs whose underlying abstract graph has chromatic number 2 or 3.  The technique used to prove the following theorem is slightly different than the one used earlier.  In particular we embed $\K_{\chi(G)}$ into $\K_{2\chi(G)}$ in a slightly different way and then add 1 to certain crossing vertices. This technique works well for situations when we can be assured that images of crossing edges are either incident or identical.  It does not always work when images of crossing edges are disjoint.

\begin{theorem}\label{thm:2n3} \rm If $\G$ has independent crossings and $\chi(G)\in\{2, 3\}$, then $X(\G) \leq 2\chi(G)$. \end{theorem}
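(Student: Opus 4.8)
The plan is to mimic the modification-of-$\alpha$ strategy from Theorems~\ref{thm:dist2} and~\ref{thm:chinX2n}, but to exploit the hint in the paragraph preceding the statement: embed $K_{\chi(G)}$ into $\K_{2\chi(G)}$ in a way that interleaves the ``old'' and ``new'' labels, and then repair each crossing by bumping certain crossing vertices by $1$ rather than by $n$. Write $n=\chi(G)$ and fix a graph homomorphism $\alpha\colon G\to K_n$. Place $2n$ vertices in convex position labeled $1,\dots,2n$ around the hull, and associate $K_n$ with the \emph{odd-indexed} vertices $1,3,5,\dots,2n-1$ via the identification $c\mapsto 2c-1$; the even slots $2,4,\dots,2n$ are the spare vertices, with spare vertex $2c$ sitting immediately clockwise of the image of color $c$. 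For $v\in V(G)$ set $\beta(v)\in\{2\alpha(v)-1,\,2\alpha(v)\}$. As in the earlier proofs, whenever $\alpha(u)\ne\alpha(v)$ the two candidate pairs $\{2\alpha(u)-1,2\alpha(u)\}$ and $\{2\alpha(v)-1,2\alpha(v)\}$ are disjoint, so $\beta$ preserves all non-identifications forced by $\alpha$ and is therefore a graph homomorphism into $K_{2n}$; only geometric correctness of crossings remains.

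Next I would handle the crossings one at a time, which is legitimate because the crossings are independent (no shared vertices) and because the bump $\beta(v)\mapsto 2\alpha(v)-1$ or $2\alpha(v)$ depends only on $\alpha(v)$, so modifications at different crossings never collide on an adjacency. Let $e_1=\{u,v\}$, $e_2=\{x,y\}$ be a crossing pair. Since $\chi(G)\le 3$ and $G[\{u,v,x,y\}]$ contains the two edges $e_1,e_2$, the image $\alpha(\{u,v,x,y\})$ has at most $3$ elements, so $\alpha(e_1)$ and $\alpha(e_2)$ cannot be disjoint: they are either incident (share exactly one color) or identical (share both colors). This is precisely the case analysis the remark says the new technique is built for. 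In the identical case, say $\alpha(u)=\alpha(y)=a$ and $\alpha(v)=\alpha(x)=b$ with $a<b$: keep $\beta(u)=2a-1$, $\beta(y)=2a$, and $\beta(x)=2b-1$, $\beta(v)=2b$, so around the hull the order is $2a-1<2a<2b-1<2b$, i.e. $\beta(u)<\beta(y)<\beta(x)<\beta(v)$, and by Lemma~\ref{lem:convex} the edges $\beta(e_1)=\{2a-1,2b\}$ and $\beta(e_2)=\{2a,2b-1\}$ cross. In the incident case, say $\alpha(v)=\alpha(x)=b$ and $\alpha(u)=a$, $\alpha(y)=c$ are distinct: assign $\beta(v)=2b$ and $\beta(x)=2b-1$ (splitting the shared color into its two adjacent slots) and leave $\beta=\alpha$-image, i.e. $\beta(u)=2a-1$, $\beta(y)=2c-1$, on the leaves; one checks that the two hull-consecutive slots $2b-1,2b$ lie on opposite sides of $\{2a-1,2c-1\}$ exactly when needed, so $\{2a-1,2b\}$ and $\{2b-1,2c-1\}$ alternate around the hull and cross. (There may be a subcase split depending on whether $a,c$ are both less than $b$, both greater, or straddle $b$; in each, one of $\beta(v),\beta(x)$ should be pushed to the slot that lands it between the two leaf-images.)

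I expect the main obstacle to be the bookkeeping in the incident case: making sure that assigning the shared color's two slots $2b-1$ and $2b$ to $x$ and $v$ in the correct order produces alternation with $\{2a-1,2c-1\}$ in all relative positions of $a,b,c$, and verifying this is robust to the various ``without loss of generality'' reductions (which leaf is $u$ vs.\ which is $x$'s partner, etc.). It is also worth a sentence to confirm that no crossing edge gets mapped to a degenerate (single-vertex) edge — this is automatic since $\beta$ is already shown to be a graph homomorphism — and to note explicitly why the disjoint-image case genuinely cannot arise here, since that is the whole reason the hypothesis $\chi(G)\in\{2,3\}$ is used. Everything else is a direct application of Lemma~\ref{lem:convex}, exactly as in the proof of Theorem~\ref{thm:dist2}.
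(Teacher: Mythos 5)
Your overall strategy is the same as the paper's: embed $K_n$ on the odd-indexed vertices of a convex $\K_{2n}$, bump selected crossing vertices into the adjacent even slots, and use $\chi(G)\le 3$ to rule out the disjoint-image case (your counting argument for why $\alpha(e_1)$ and $\alpha(e_2)$ must share a color is correct and worth making explicit, as the paper does not). However, there are two concrete problems with the execution.

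First, your identical-image case is wrong as written. With $\beta(u)=2a-1$, $\beta(y)=2a$, $\beta(x)=2b-1$, $\beta(v)=2b$, the cyclic order of the images is $\beta(u)<\beta(y)<\beta(x)<\beta(v)$, so both endpoints of $\beta(e_2)=\{2a,2b-1\}$ lie strictly between the endpoints of $\beta(e_1)=\{2a-1,2b\}$; by Lemma~\ref{lem:convex} this is a \emph{nested}, non-crossing pair (the test $a_1<b_1<a_2<b_2$ fails at $2b<2b-1$). The fix is to give both endpoints of one edge the odd slots and both endpoints of the other the even slots: $\beta(u)=2a-1$, $\beta(v)=2b-1$, $\beta(y)=2a$, $\beta(x)=2b$, so that $\{2a-1,2b-1\}$ and $\{2a,2b\}$ alternate. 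Second, the incident case is not actually proved: you defer it to ``one checks'' and acknowledge a pending subcase split, but the uniform recipe $\beta(v)=2b$, $\beta(x)=2b-1$ genuinely fails in some configurations (e.g.\ if $b<a$ and $b<c$, the images $\{2a-1,2b\}$ and $\{2b-1,2c-1\}$ are nested). Since $2b-1$ and $2b$ are consecutive on the hull, the correct rule is to assign $2b-1$ to whichever of $v,x$ is adjacent (in $\G$) to the leaf whose image is encountered first after $2b$ going around the hull, and $2b$ to the other; this must be stated and checked in each relative ordering of $a,b,c$. Both repairs stay entirely within your framework, but as submitted the proof contains one false crossing claim and one unexecuted case analysis, so it is not yet complete.
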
 

\begin{proof} 
 
 As argued in the previous theorems, because the crossings of $\G$ are independent, given a graph homomorphism $\alpha:G\to K_{\chi(G)}$, we may modify $\alpha$ to $\beta$ on  vertices of one crossing at a time, with the images under $\beta$ based on the images under $\alpha$. Our result will be $\beta:\G\to K_{2\chi(G)}$, a geometric graph homomorphism.  Below this is done explicitly in each of the cases $\chi(G)=3$ and $\chi(G) = 2$.\medskip

 If $\chi(G)=3$, begin with $\alpha:G\to K_3$ and modify to $\beta:\G\to \K_6$, the convex 6-clique with vertices labeled 1, 2, 3, 4, 5, 6 around the convex hull. Here we will consider the $3$-clique on vertices $1, 3, 5$ to be $K_3$.\medskip 
 
 Consider a crossing  in $\G$ with edges $e_1=\{u,v\}$ and $e_2=\{x,y\}$.  Either $\alpha(e_1)$ and $\alpha(e_2)$ are incident in $K_3$ or they are identical. If they are incident, without loss of generality, assume $\alpha(u)=1, \alpha(v)=\alpha(x) = 3$ and $\alpha(y)=5$.  Modify so that $\beta(x) =\alpha(x)+1 = 4$ while $\beta=\alpha$ on $u,v$ and $y$.  Then $\beta(e_1)= \{1,3\}$ and $\beta(e_2)= \{2,5\}$, a pair of crossing edges in $\K_6$.   If $\alpha(e_1)$ and  $\alpha(e_2)$ are identical, without loss of generality, we may assume $\alpha(u)= \alpha(y)=1$ and $\alpha(v)=\alpha(x)=3$. Modify so that $\beta(y)=\alpha(y)+1=2$, and $\beta(x) = \alpha(x) + 1= 4$ while $\beta=\alpha$ on $u$ and $v$.  Then $\beta(e_1)= \{1,3\}$ and $\beta(e_2) = \{2, 4\}$, crossing edges in $\K_6$.  After applying this process to each crossing in $\G$, we have achieved $\beta:\G \to \K_6$, a geometric graph homomorphism. \medskip

For the case $\chi(G)=2$, we begin with $\alpha:G\to K_2$ and modify to $\beta:\G\to \K_4$, the convex 4-clique with vertices labeled 1, 2, 3, 4 around the convex hull.  Here we will consider the $2$-clique on vertices $1,3 $ to be $K_2$. Since $\alpha$ necessarily maps crossing edges of $\G$ to the same edge of $K_2$, we can assume crossing edges $e_1=\{u, v\}$ and $e_2=\{x, y\}$ with $\alpha(u)=\alpha(y)=1$ and $\alpha(v)=\alpha(x)=3$. Define $\beta(y)=\alpha(y)+1=2 $ and $\beta(x)= \alpha(x) + 1=4$, while $\beta=\alpha$ on $u$ and $v$. Thus $\beta(e_1)= \{1, 3\}$ and $\beta(e_2) = \{2, 4\}$, crossing edges in $\K_4$.  After applying this process to each crossing in $\G$, we have achieved $\beta:\G \to \K_4$, a geometric graph homomorphism. \medskip. \end{proof}

\section{Pseudo-geochromatic number}\label{sec:pseudo}

It is natural to expect that the definition of $X(\overline{G})$ might be the same as  \lq the smallest integer $n$ so that the vertices of $\overline{G}$ can be colored with $n$ colors so that distinct colors are given to all vertex pairs corresponding to edges and to all vertex quadruples corresponding to crossing edges.'  Call this parameter the {\it pseudo-geochromatic number}, $X'(G)$. However, in general $X(\G)\ne X'(\G)$.  Consider $\overline{G}$ the geometric graph in Figure \ref{fig:diffcolor}. The coloring of the vertices as shown meets the conditions of a pseudo-geochromatic coloring.  A little arguing shows $X'(\G)=5$.  However, as shown in Theorem \ref{thm:obs}(d), the two vertices colored 5 cannot be identified under any geometric homomorphism.  Further, by Theorem \ref{thm:obs}(b), no other vertex pair of $\overline{G}$ can be identified by any homomorphism since each pair is involved in a common crossing. Thus, we conclude that $X(\overline{G})=6$.  This shows that the conditions for the pseudo-geochromatic number are weaker than the those of the geochromatic number.  

\begin{figure}[hbt]
 \centering
 \scalebox{0.5}
 {
 \begin{tikzpicture}[scale=4]
	
	 \tikzstyle{vertex}=[draw, circle, thick] 
	 \tikzstyle{edge} = [draw,line width=1.5pt,-]
	 \node[vertex,   fill = black] (a) at (-.5,0){};
	 \node[left] at (-.6,0) {{\Large \boldmath $1$}};
	 \node[vertex, fill = black] (b) at (.5,0){};
	 \node[right] at (.6,0) {{\Large \boldmath $2$}};
	 \node[vertex, fill = black] (c) at (0,.75){};
	 \node[above] at (0,.85) {{\Large \boldmath $3$}};
	 \node[vertex, fill = black] (x) at (-.5,.5){};
	 \node[above] at (-.55,.6) {{\Large \boldmath $5$}};
	 \node[vertex, fill = black] (y) at (.5,.5){};
	 \node[above] at (.535,.6) {{\Large \boldmath $5$}};
	 \node[vertex, fill = black] (z) at (0,-.4){};
	 \node[below] at (0,-.55) {{\Large \boldmath $4$}};
	 \draw[edge] (a) -- (b) -- (c) -- (a);
	 \draw[edge] (x) -- (z)--(y);
 \end{tikzpicture}}
 \caption{$X'(\G)<X(\G)$}
 \label{fig:diffcolor}
 \end{figure}
 
 Using this example as inspiration, we can create a family of geometric graphs with $X(\G)$ arbitrarily larger than $X'(\G)$.

\begin{figure}[hbt]
 \centering
 \scalebox{0.5}
 {
 \begin{tikzpicture}[scale=4]
	
	 \tikzstyle{vertex}=[draw, circle, thick] 
	 \tikzstyle{vertex2}=[ draw, circle] 
	 \tikzstyle{edge} = [draw,line width=1.5pt,-]
	 
	 \node[vertex, fill = black] (1) at (-1,1.732){};
	 \node[left] at (-1.15,1.832) {{\Large \boldmath $1$}};
	 
	 \node[vertex, fill = black] (2) at (-1.618,1.176){};
	 \node[left] at (-1.768,1.226) {{\Large \boldmath $2$}};
	 
	 \node[vertex, fill = black] (3) at (-1.956,.416){};
	 \node[left] at (-2.156,.416) {{\Large \boldmath $3$}};
	 
	 \node[vertex, fill = black] (4) at (-1.956,-.416){};
	 \node[left] at (-2.156,-.416) {{\Large \boldmath $4$}};

	 \node[vertex, fill = black] (11) at (2,0){};
	 \node[right] at (2.2,0) {{\Large \boldmath $2m+1$}};
	 \node[vertex, fill = black] (12) at (1.828, .814){};
	 \node[right] at (2.028, .814) {{\Large \boldmath $2m+2$}};
	 \node[vertex, fill = black] (13) at (1.338, 1.486){};
	 \node[right] at (1.538, 1.536) {{\Large \boldmath $2m+3$}};
	 \node[vertex, fill = black] (14) at (.618,1.902){};
	 \node[right] at  (.818,2.002) {{\Large \boldmath $2m+4$}};
	 	 
	 \node[vertex, fill = black] (6) at (-1,-1.732){};
	 \node[below] at (-1.2,-1.872) {{\Large \boldmath $m+1$}};
	 \node[vertex, fill = black] (7) at (-.21,-1.990){};
	 \node[below] at (-.28,-2.170) {{\Large \boldmath $m+2$}};
	 \node[vertex, fill = black] (8) at (.618,-1.902){};
	 \node[below] at (.618,-2.050) {{\Large \boldmath $m+3$}};
	 \node[vertex, fill = black] (9) at (1.338, -1.486){};
	 \node[below] at (1.338, -1.556) {{\Large \boldmath $m+4$}};

	 \node[vertex2, fill = gray] (10a) at (.07, 1.986){};
	 \node[vertex2, fill = gray] (10b) at (-.484, 1.940){};
	 \node[vertex2, fill = gray] (10) at (-.21,1.990){};
	 
	 \node[vertex2, fill = gray] (5a) at (-1.766, -.938){};
	 \node[vertex2, fill = gray] (5b) at (-1.438, -1.390){};
	 \node[vertex2, fill = gray] (5) at (-1.618,-1.176){};
	 
	 \node[vertex2, fill = gray] (15) at (1.828, -.814){};
	 \node[vertex2, fill = gray] (15a) at (1.696, -1.06){};
	 \node[vertex2, fill = gray] (15b) at (1.922, -.552){};

	 \draw[edge] (6) -- (1) -- (11) -- (6);
	 \draw[edge] (7) -- (2) -- (12) ;
	 \draw[edge] (8) -- (3) -- (13) ;
	 \draw[edge] (9) -- (4) -- (14);
 \end{tikzpicture}}
 \caption{}
 \label{fig:NewThm}
 \end{figure}

\begin{theorem} \rm For any $n\in \Z^+$ there is a geometric graph $\G$ for which $X(\G)-X'(\G) = n$. \end{theorem}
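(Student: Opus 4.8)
The plan is to generalize the construction in Figure \ref{fig:diffcolor} by replacing the single triangle with a large odd cycle and attaching, to each edge of that cycle, a pendant path of length $2$ whose two edges both cross every edge of the triangle-like "core." More precisely, I would take a cycle $C$ of odd length $2m-1$ (or $m$ copies of a suitable gadget, tuned so the arithmetic works out to hit an arbitrary target $n$) drawn so that all its edges are mutually crossed by a single auxiliary structure, then attach to each core vertex a path that is forced, by Theorem \ref{thm:obs}(d), to have its two far endpoints non-identifiable. The point is that in a pseudo-geochromatic coloring we may reuse colors freely across these "antenna" endpoints — since by Lemma \ref{lem:nonconvex}/\ref{lem:convex} they participate in no common crossing — whereas in a genuine geometric homomorphism Theorem \ref{thm:obs}(d) forbids identifying each antenna's two endpoints, and Theorem \ref{thm:obs}(b) forbids identifying most other pairs, forcing $X(\G)$ to be larger by exactly the number of antennas.

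The key steps, in order, are: (1) describe the drawing precisely — place the core vertices in convex position, route the crossing edges, and place each antenna so that Lemma \ref{lem:convex} confirms both of its edges cross all core edges but that distinct antennas are pairwise "crossing-independent" from each other; (2) compute $X'(\G)$ by exhibiting an explicit pseudo-geochromatic coloring (reusing one color, say, on one endpoint of every antenna, since nothing in the pseudo-definition couples them) and a matching lower bound from the crossing quadruples in the core; (3) compute $X(\G)$ by first invoking Theorem \ref{thm:obs}(b) to show essentially no pair of distinct vertices can be identified except possibly within a single antenna, then invoking Theorem \ref{thm:obs}(d) to kill even those identifications, so that $X(\G)$ equals the vertex count of a slightly-reduced graph; (4) check that the difference $X(\G)-X'(\G)$ equals $n$, adjusting the number $m$ of antennas (and possibly padding the core) so the formula is exactly $n$ for every positive integer $n$, handling any small parity or boundary cases ($n=1,2$) separately if needed; (5) confirm $\G$ is realizable as a bona fide geometric graph (straight-line, general position) — this is where Lemma \ref{lem:convex} does the real bookkeeping.

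The main obstacle I anticipate is step (1) together with the "separation" part of step (3): I need the antennas to simultaneously (a) each cross all core edges, so Theorem \ref{thm:obs}(d) applies to each one individually, and (b) be mutually independent enough that no antenna endpoint from one antenna shares a crossing with an antenna endpoint from another — otherwise Theorem \ref{thm:obs}(b) would already have forced those apart and the pseudo-coloring couldn't reuse colors, collapsing the gap. Getting a straight-line drawing that threads this needle for arbitrarily many antennas is the delicate geometric part; the convex-position placement with carefully chosen "bundles" of crossing segments (as hinted by the gray vertices in Figure \ref{fig:NewThm}) should work, and Lemma \ref{lem:convex} reduces the verification to checking an alternation condition on labels. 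Once the drawing is pinned down, the two counting arguments are routine, and the final equality $X(\G)-X'(\G)=n$ follows by choosing the number of antennas to match $n$.
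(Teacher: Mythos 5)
There is a genuine gap, and it sits exactly at the point you flagged as your ``main obstacle'' --- but the tension must be resolved in the direction opposite to the one you chose. You require distinct antennas to be mutually crossing-independent so that the pseudo-geochromatic coloring can reuse a single color on one endpoint of every antenna. But then your step (3) has no support: Theorem \ref{thm:obs}(b) separates two vertices only when they lie on a pair of edges that actually cross, so it says nothing about two endpoints (or two centers) belonging to \emph{different} antennas, precisely because you arranged for those pairs to share no crossing; none of the obstructions (a)--(d) applies to them. Indeed, in a natural drawing with ``parallel'' antennas each crossing the core edges in the same pattern, the map collapsing every antenna onto a single one is itself a geometric homomorphism, so $\G$ maps into the small geometric graph consisting of the core plus one antenna (essentially Figure \ref{fig:diffcolor}), and $X(\G)$ stays bounded by a constant independent of the number of antennas. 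Since $X'(\G)$ is also bounded in your construction, the difference $X(\G)-X'(\G)$ cannot be made equal to an arbitrary $n$. Note also the internal inconsistency: step (3) invokes Theorem \ref{thm:obs}(b) to force apart ``essentially all'' pairs, while your independence requirement deletes exactly the crossings that (b) would need for cross-antenna pairs.

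The paper's construction (Figure \ref{fig:NewThm}) makes the opposite choice: all $3m$ vertices are in convex position and the $m-1$ length-two paths are interleaved with the central triangle and with each other, so (by Lemma \ref{lem:convex}) every pair of edges from distinct paths crosses. Hence \emph{both} colorings --- pseudo and geometric --- are forced, by the crossing quadruples and Theorem \ref{thm:obs}(a),(b), to give distinct colors to all cross-antenna pairs; the two parameters are then made to differ only on the two endpoints within each path, where the pseudo-coloring is unconstrained (the two path edges do not cross each other) but Theorem \ref{thm:obs}(d) forbids identification under any geometric homomorphism, since each path's edges cross all edges of the triangle. Each path therefore contributes exactly one to the gap, giving $X(\G)=3m$, $X'(\G)=2m+1$, and $X(\G)-X'(\G)=m-1=n$. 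So the gap is not obtained by keeping $X'(\G)$ small, as you propose, but by letting both invariants grow while controlling their difference. To salvage your architecture you would need a new obstruction, beyond Theorem \ref{thm:obs}, preventing identification of endpoints of distinct independent antennas --- and the collapsing homomorphism above shows no such obstruction can exist for the drawings you describe.
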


\begin{proof} Let $n\in \Z^+$, and $m=n+1$.  Construct the geometric graph on $3m$ vertices given in Figure \ref{fig:NewThm}.  There is the obvious geometric homomorphism to the convex $K_{3m}$.  Thus $X(\G)\leq 3m$.\medskip

Let $p:V(\G) \to \{1, 2, \ldots, 3m\}$ be a psuedo-geochromatic coloring, and $g:V(\G) \to \{1, 2, \ldots, 3m\}$ a geochromatic coloring. \medskip

Let us start with the induced subgraph on $v_1, v_{m+1}, v_{2m+1}$. Since these vertices  form a 3-cycle, each must have a different color under either coloring.  Let $p(v_1)=g(v_1)=1$, $p(v_{m+1})=g(v_{m+1})=m+1$, and  $p(v_{2m+1})=g(v_{2m+1})=2m+1$.  We will add, and color, the paths of length 2 one at time.  Add the path on $v_2, v_{m+2}$, and $v_{2m+2}$.  As drawn, the edge $\{v_2, v_{m+2}\}$ will cross both $\{1,m+2\}$ and $\{m+1, 2m+2\}$  Thus under both colorings, $v_2$ cannot be colored $1, m+1$, or $2m+1$. So let $p(v_2)=g(v_2)=2$.  Since the edges $\{ v_2, v_{m+2}\}$ and $\{v_2,v_{2m+2}\}$ do not cross, we may let  $p(v_{m+2})=p(v_{2m+2})=m+2$. However, by Theorem \ref{thm:obs}(d) since this path of length 2 crosses each edge of our $C_3$, the colors $g(v_{m+1}), g(v_{2m+1})$ must be distinct.  Thus let  $g(v_{m+2})=m+2$ and $g(v_{2m+2})=2m+2$.  We continue in this manner with each new path of length 2.  For $3\leq i \leq m-1$, consider $v_i, v_{m+i}, v_{2m+i}$.  Because its incident edges cross edges incident to all previous vertex colors $v_i$ will have to be colored differently than any previous vertex.  Thus let $p(v_i)=g(v_i)=i$.  Since under the coloring $p$, $\{v_1, v_{m+i}\}$ and $\{v_i, v_{2m+i}\}$ cross edges incident to vertices colored $\{1, 2, \ldots, i\}\cup \{{m+1},\ldots, {m+i-1}\}$  the two endpoints of the path must also be colored differently from all $\{1, 2, \ldots, i\}\cup \{{m+1},\ldots, {m+i-1}\}$.  Since the edges $\{v_i, v_{m+i}\}, \{v_i, v_{2m+i}\}$ do not cross, we may let $p(v_{m+i})=p(v_{2m+i})=m+i$.  However, since the edges $\{v_i, v_{m+i}\}, \{v_i, v_{2m+i}\}$ cross every edge of our $C_3$, they cannot be identified by $g$.  Thus let $g(v_{m+i})=m+i$ and $g(v_{2m+i})=2m+i$.\medskip

In the coloring $g$ we used all $3m$ available colors; thus  $X(\G)=3m$.  While in $p$ we never needed the colors $2m+2, \ldots, 3m$; thus $X'(\G)=3m-(m-1)$.  Therefore $X(\G)-X'(\G) = m-1 = n$, as desired. \end{proof}

\section {Conflict of Interest}

The authors have no conflict of interest to declare.

\bibliographystyle{plain}
\bibliography{GeochromIndep}

\end{document}